\newtheorem{theorem}{Theorem}[section]
\theoremstyle{definition}
\newtheorem{definition}[theorem]{Definition}
\newtheorem{remark}[theorem]{Remark}
 \numberwithin{equation}{section}
\begin{document}
\title{\bf{New Boundes for Sombor Index of Graphs}}

%

\author{ Maryam Mohammadi, Hasan Barzegar \footnote{Corresponding author (barzegar@tafreshu.ac.ir)}\\
{\small\em Department of Mathematics, Tafresh University, Tafresh 39518-79611, Iran.} \\
}

\maketitle

\begin{abstract}
 \noindent
In this paper, we find some bounds for the Sombor index of the graph 
$G$ 
by triangle inequality, arithmetic index, geometric index, forgotten index (F(G)), arithmetic-geometric (AG) index, geometric-arithmetic (GA) index, symmetric division deg index (SDD(G)) and some central and dispersion indices. The bounds could state estimated values and error intervals of the Sombor index to show limits of accuracy. The error intervals are written as inequalities.
 \\[2mm]


{\bf Keywords:} Sombor index; topological indices; central and dispersion indices; geometric-arithmetic index; arithmetic-geometric index.\\[2mm]
 \end{abstract}

\section{Introduction}

 A topological index of a graph is a real number pertaining to the graph, independent of the labeling or graphic displaying. Also, it is invariant under graph isomorphism and states some molecular graph's structural properties. One of these topological indices is the Sombor index introduced by Ivan Gutman for the physicochemical properties of molecules and surveyed for some graphs by Saeid Alikhani and Nima Ghanbary, Kinkar Chandra Das, et all, Nima Ghanbari and Saeid Alikhani,  Igor Milovanovi\'{c} and et all, Maryam Mohammadi and et all  in articles such as  \cite{Kin}, \cite{nima}, \cite{Igor} and \cite{mary}.

  Let 
$G=(V,E)$
(
$|V(G)|=n$
and 
$|E(G)|=m$
) be a graph, then the (reduced and averaged) Somber index is defined as below
\begin{align}\label{eq1}
SO(G)=\sum_{{uv}\in E(G)}\sqrt{d^2(u)+d^2(v)},~~~~~~~~~~~~~~~~~~~~~
\end{align}
\begin{align}\label{eq2}
SO_{red}(G)=\sum_{{uv}\in E(G)}\sqrt{(d(u)-1)^2+(d(v)-1)^2}~~~~
\end{align}
and
\begin{align}\label{eq3}
SO_{ave}(G)=\sum_{{uv}\in E(G)}\sqrt{(d(u)-\frac{2m}{n})^2+(d(v)-\frac{2m}{n})^2}.
\end{align}

where
$d(u)$
is the degree of vertex 
$u$
in 
$G.$
\\
Some other indices such as the arithmetic index and geometric index are studied by J.M Aldaz and Halil \.{I}brahim \c{C}elik in articles like \cite{Aldas}, \cite{m-aldas} and \cite{Ibra}.
GA and AG indices are studied by Shu-Yu Cui et all, Dhritikesh Chakrabarty, Kinkar Ch.Das, Ronald E. Glaser, Burt Rodin, Sa\v{s}a Vujo\v{s}evi\'{c} and et all, in articles such as \cite{cui}, \cite{kin-2}, \cite{cha}, \cite{glas}, \cite{Rodin} and \cite{vuj}
and symmetric division deg index (SDD(G)) in reference \cite{vas} by Alexander Vasilyev. 

Here we will state some bounds for the Sombor index by some mathematics relations and famous indices.
\section{The triangle inequality and Sombor index}\label{sec-2}

\begin{definition}
The ordered pair 
$z=(d(u),d(v))$
as a point in degree-coordinate (or d-coordinate) called the degree-point (or d-point) of the edge 
$uv \in E(G)$
such that 
$d(u)$
denote the degree of the vertex 
$u$
and
$d(v)$
the degree of te vertex
$v$
in the (2-dimensional) coordinate system. The point with coordinates
$(d(v),d(u))$
is the dual-degree-point (or dd-point) of the edge
$uv \in E(G).$
\end{definition}

\begin{definition}
The degree-radius (or d-radius) of the edge 
$uv \in E(G)$
is the distance between
$(d(u),d(v))$
and the origin of the coordinate system denoted by
$|z|=r(d(u), d(v))=\sqrt{d^2(u)+d^2(v)},$
and we have
\begin{align}\label{7}
\sqrt{2}|z|\ge d(u)+d(v).
\end{align}
 The distance between two d-points 
$z_1=(d(u_1),d(v_1))$
and
$z_2=(d(u_2),d(v_2))$
is \\
$|z_1-z_2|=\sqrt{(d(u_1)-d(u_2))^2+(d(v_1)-d(v_2))^2}$
and  triangle inequality gives us a upper bound for the absolute of the summation of d-points
$z_1, z_2,...,z_n$
as
$|z_1+z_2+...+z_n|\le |z_1|+|z_2|+...+|z_n|$
for
$n=2,3,...~.$
\end{definition}

\begin{remark}
In cases of the reduced Sombor index and averaged Sombor index, it is enough to change once
$d(u) \rightarrow d(u)-1$
and once
$d(u) \rightarrow d(u)-\frac{2m}{n},$
then we have
$z'=(d(u)-1,d(v)-1)$ 
and
$z''=(d(u)-\frac{2m}{n},d(v)-\frac{2m}{n})$ 
such that 
$|z'|=r(d(u)-1, d(v)-1)=\sqrt{(d(u)-1)^2+(d(v)-1)^2}$
and
$|z''|=r(d(u)-\frac{2m}{n}, d(v)-\frac{2m}{n})=\sqrt{(d(u)-\frac{2m}{n})^2+(d(v)-\frac{2m}{n})^2}.$
Then similarly use the following definitions and theorems . 
\end{remark}

\begin{theorem}\label{theo1}
Let 
$G$
be a connected graph with 
$n$
vertices and 
$m$
edges such that d-point 
$z=(d(u),d(v))$ 
is pertained to the edge 
$uv \in E(G),$
then 
\begin{align}
|z_1+z_2+...+z_m|\le SO(G) \le \sum_{i=1}^m\sqrt{2(|z_i|^2-d(u_i)d(v_i))}.
\end{align} 
\end{theorem}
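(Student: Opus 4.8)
The plan is to record the trivial identity $SO(G)=\sum_{i=1}^m |z_i|$ coming straight from \eqref{eq1} and the definition of the $d$-radius, and then to split the claimed double inequality into its two halves, each of which is essentially immediate. Write $E(G)=\{u_1v_1,\dots,u_mv_m\}$ and $z_i=(d(u_i),d(v_i))$, so that $|z_i|=\sqrt{d^2(u_i)+d^2(v_i)}$ and $SO(G)=\sum_{i=1}^m|z_i|$.

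For the lower bound I would simply invoke the triangle inequality for the Euclidean norm in the plane, exactly as stated in Definition~\ref{theo1}'s preamble (the inequality $|z_1+\cdots+z_m|\le |z_1|+\cdots+|z_m|$), and observe that the right-hand side is by definition $SO(G)$. Nothing further is needed here; I would only remark that this bound is generally far from sharp, since equality in the vector triangle inequality forces all the $z_i$ to be positive scalar multiples of one another.

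For the upper bound the key observation is that it suffices to prove it termwise, i.e.\ to show $|z_i|\le \sqrt{2\bigl(|z_i|^2-d(u_i)d(v_i)\bigr)}$ for every edge, and then sum over $i=1,\dots,m$. Squaring, this is equivalent to $|z_i|^2\le 2|z_i|^2-2d(u_i)d(v_i)$, that is $2d(u_i)d(v_i)\le d^2(u_i)+d^2(v_i)$, which is just $\bigl(d(u_i)-d(v_i)\bigr)^2\ge 0$. I would also note in passing that this same inequality guarantees $|z_i|^2-d(u_i)d(v_i)\ge \tfrac12|z_i|^2\ge 0$, so each radicand on the right is nonnegative and the expression is well defined. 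Equality throughout holds precisely when $d(u_i)=d(v_i)$ for every edge, e.g.\ when $G$ is regular.

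There is no real obstacle in this argument: both bounds reduce to elementary facts (the vector triangle inequality and $(a-b)^2\ge0$). If anything, the only point deserving a sentence of care is the well-definedness of the square roots on the upper side and a remark on when each inequality is tight, so that the statement is not merely a pair of estimates but carries the usual information about extremal (regular) graphs.
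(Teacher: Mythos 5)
Your proposal is correct and follows essentially the same route as the paper: the lower bound is the vector triangle inequality applied to $SO(G)=\sum_{i=1}^m|z_i|$, and the upper bound is the termwise inequality $|z_i|\le\sqrt{2(|z_i|^2-d(u_i)d(v_i))}$, which after squaring is exactly the paper's inequality $\sqrt{2}|z|\ge d(u)+d(v)$, i.e.\ $(d(u_i)-d(v_i))^2\ge 0$. Your added remarks on well-definedness of the radicand and on the equality cases are correct and a small improvement over the paper's presentation.
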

\begin{proof}
 Base on the triangle inequality and its generalization
$$SO(G)=\sum _{uv \in E(G)}\sqrt{d^2(u)+d^2(v)}=|z_1|+|z_2|+...+|z_m| \ge |z_1+z_2+...+z_m|.$$
For the right inequality, using (\ref{7}), 
$(d(u)+d(v))^2\le (\sqrt{2}|z|)^2$ 
and hence
$d^2(u)+d^2(v)\le 2|z|^2-2d(u)d(v),$
 which implies 
$SO(G)\le \sum_{i=1}^m \sqrt{2(|z_i|^2-d(u_i)d(v_i))}.$
\end{proof}  

\section{The forgotten index and Sombor index}
\begin{definition}
Consider the arithmetic mean for the nonnegative real numbers as
$ \mu=R_a=\frac{x_1+x_2+...+x_m}{m}=\frac{1}{m}\sum_{i=1}^mx_i,$
for the geometric mean as 
$R_g=\sqrt[m]{x_1.x_2...x_m}=\prod_{i=1}^mx_i^\frac{1}{m}$
and for the harmonic mean as
$R_h=\frac{m}{\sum_{i=1}^m{\frac{1}{x_i}}}.$
\end{definition}
\begin{remark}
Note that {\it Inequality of arithmatic and geometric mean} for the nonnegative real numbers
$x_1,~x_2,~...,~x_m$ 
is as follow,
\begin{align}\label{eq7}
R_g=\prod_{i=1}^mx_i^\frac{1}{m}\le \frac{1}{m}\sum_{i=1}^mx_i=R_a.
\end{align}
Equality holds whenever 
$x_1=x_2=...=x_m.$
\\
Also with refering to the refrence \cite{m-aldas} we could consider that for 
$i=1,~2,~...,~m,$
if
$\alpha=(\alpha_1,\alpha_2,...,\alpha_m),$
$\alpha _i >0$
and
$\sum_{i=1}^m\alpha_i=1,$
then the general arithmethic-geometric inequality is
$\prod_{i=1}^mx_i^{\alpha_i}\le \sum_{i=1}^m\alpha^ix^i.$
Also if apply the variable change 
$x_i=y_i^s,$
then the general arithmethic-geometric inequality for 
$s>0$
is as follows:
\begin{align}\label{eqq7}
\prod_{i=1}^my_i^{\alpha_i} \le(\sum_{i=1}^m\alpha_i y_i^s)^\frac{1}{s},
\end{align}
and for 
$0<s<1,$
\begin{align}\label{eq8}
(\sum_{i=1}^m\alpha_i y_i^s)^\frac{1}{s}\le\sum_{i=1}^m\alpha_i y_i,
\end{align}
named Jenson's inequality that is stric unless 
$x_1=x_2=...=x_m.$
So, using  (\ref{eqq7}) and (\ref{eq8}) for $0<s<1$ we have
\begin{align}
\prod_{i=1}^my_i^{\alpha_i} \le(\sum_{i=1}^m\alpha_i y_i^s)^\frac{1}{s}\le\sum_{i=1}^m\alpha_i y_i.
\end{align}
\end{remark}

Base on the equality (\ref{eq7}),  if
$G$
be a graph with
$n$
vertices and
$m$ 
edges such that 
$z_i $
for 
$i=1,~ 2, ~..., ~m$
be d-points pertained edges, then for the nonnegative numbers
$|z_1|,~|z_2|,~...,~|z_m|$,
\begin{align}
mR_g\le SO(G).
\end{align}
Equality holds whenever 
$|z_1|=|z_2|=...=|z_m|.$
Also using inequality (\ref{eq8}), consider $\alpha_i=\frac{1}{m}$, $s=\frac{1}{2}$ and $y_i=d(u)^2+d(v)^2$ we have 
\begin{align}
SO(G)\leq \sqrt{F(G)m},
\end{align}
in which 
$F(G)=\Sigma (d(u)^2+d(v)^2).$
The index 
$F(G)$
is called {\it forgotten index} and introduced in \cite{fortulla}. This relation is been proved in \cite{Igor} by another way.

\section{ The geometric mean, variance and Sombor index}
 The variance of the nonnegative real numbers 
 $X=\{x_1^r, x_2^r, \cdots x_m^r\}$
  is dfined as 
$\sigma^2(X^r)=\frac{1}{m}\sum_{i=1}^m(x_i^r-\sum_{i=1}^m\frac{1}{m}x_i^r)^2$
and  displayed as
$\sigma^2(X^r).$

 \begin{theorem}
Let 
$G$
be a graph with 
$n$
vertices and 
$m$
edges
such that for 
$i=1,2,...,m$
$z_i$
be the d-points of 
$G,$
then
\begin{align}
m(R_g+\sigma ^2 (z^{\frac{1}{2}}))\le SO(G)
\end{align}
in which  
$z=\{|z_1|, |z_2|, \cdots, |z_m|\}.$
\end{theorem}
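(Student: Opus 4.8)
The plan is to reduce the claimed inequality to a single application of the arithmetic--geometric mean inequality~(\ref{eq7}), applied not to the d-radii $|z_i|$ themselves but to their square roots $\sqrt{|z_i|}$. The key observation is that the variance term is designed to telescope against the arithmetic mean $\frac1m\sum|z_i|=\frac1m SO(G)$.

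First I would rewrite the variance term by the elementary identity ``mean of the squares minus the square of the mean'': taking $r=\frac12$ and $x_i=|z_i|$ in the definition of $\sigma^2$,
\[
\sigma^2(z^{\frac12})=\frac{1}{m}\sum_{i=1}^m\Big(\sqrt{|z_i|}-\frac1m\sum_{j=1}^m\sqrt{|z_j|}\Big)^{2}
=\frac{1}{m}\sum_{i=1}^m|z_i|-\Big(\frac1m\sum_{i=1}^m\sqrt{|z_i|}\Big)^{2}.
\]
Since $\frac1m\sum_{i=1}^m|z_i|=\frac1m\,SO(G)$, substituting this into $m\big(R_g+\sigma^2(z^{\frac12})\big)$ makes the two copies of $\sum_i|z_i|$ cancel, and the asserted bound $m\big(R_g+\sigma^2(z^{\frac12})\big)\le SO(G)$ becomes equivalent to
\[
R_g\le\Big(\frac1m\sum_{i=1}^m\sqrt{|z_i|}\Big)^{2}.
\]

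To finish, I would apply the arithmetic--geometric mean inequality (\ref{eq7}) to the nonnegative numbers $\sqrt{|z_1|},\dots,\sqrt{|z_m|}$, obtaining
\[
\Big(\prod_{i=1}^m\sqrt{|z_i|}\Big)^{1/m}\le\frac1m\sum_{i=1}^m\sqrt{|z_i|}.
\]
The left-hand side equals $\big(\prod_{i=1}^m|z_i|\big)^{1/(2m)}=R_g^{1/2}$, so squaring both (nonnegative) sides gives exactly $R_g\le\big(\frac1m\sum_i\sqrt{|z_i|}\big)^{2}$; multiplying through by $m$ yields the theorem. The equality discussion comes for free from the equality case of (\ref{eq7}): equality holds precisely when $\sqrt{|z_1|}=\cdots=\sqrt{|z_m|}$, i.e. $|z_1|=\cdots=|z_m|$, in which case $\sigma^2(z^{\frac12})=0$ and we recover the earlier bound $mR_g=SO(G)$.

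There is essentially no analytic obstacle here; the only thing that needs care is the algebraic bookkeeping that collapses the variance term, after which the statement is nothing more than AM--GM for the square roots $\sqrt{|z_i|}$. A minor point worth one sentence is to note that all quantities in sight are nonnegative (immediate, since each $|z_i|=\sqrt{d^2(u_i)+d^2(v_i)}\ge 0$), so that squaring the AM--GM inequality preserves its direction.
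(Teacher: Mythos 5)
Your proof is correct, and it takes a more self-contained route than the paper. The paper simply invokes Theorem~1 of \cite{m-aldas} (Aldaz's ``self-improvement'' of the AM--GM inequality), specializes the weights to $\alpha_i=\tfrac1m$ with $x_i=|z_i|$, and rearranges. You instead prove the needed special case from scratch: the identity $\sigma^2(z^{1/2})=\frac1m\sum_i|z_i|-\bigl(\frac1m\sum_i\sqrt{|z_i|}\bigr)^2$ collapses the claim to $R_g\le\bigl(\frac1m\sum_i\sqrt{|z_i|}\bigr)^2$, which is AM--GM applied to the numbers $\sqrt{|z_1|},\dots,\sqrt{|z_m|}$ followed by squaring. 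This is in fact exactly how the uniform-weight case of Aldaz's theorem is proved, since with $\alpha_i=\tfrac1m$ his right-hand side $\sum_i\alpha_ix_i-\sum_i\alpha_i\bigl(x_i^{1/2}-\sum_k\alpha_kx_k^{1/2}\bigr)^2$ simplifies to $\bigl(\frac1m\sum_ix_i^{1/2}\bigr)^2$. What your version buys is independence from the external reference and a transparent equality analysis ($|z_1|=\cdots=|z_m|$), which the paper does not state for this theorem; what the paper's version buys is brevity and immediate access to the general weighted form should one want non-uniform $\alpha_i$. Your closing remark on nonnegativity is the right care to take (and since every $|z_i|=\sqrt{d^2(u_i)+d^2(v_i)}\ge\sqrt2>0$ for an edge of a graph without isolated endpoints, $R_g$ is well defined and strictly positive).
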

\begin{proof}
With refering to the theorem 1 of the refrence \cite{m-aldas} that states if for 
$i=1,2,...,m$ 
$x_i>0,$
$\alpha_i>0$
and
$\sum_{i=1}^m\alpha_i=1,$
then
\begin{align}
\prod_{i=1}^mx_i^{\alpha_i}\le \sum_{i=1}^m\alpha_i x_i-\sum_{i=1}^m\alpha _i(x_i^\frac{1}{2}-\sum_{k=1}^m\alpha _k x_k^\frac{1}{2})^2.
\end{align}
Now consider the graph 
$G$
with d-points
$z_i$
for 
$i=1,2,...,m.$
So
$|z_i|>0,$
and for 
$\alpha_i=\frac{1}{m}$
we have
\begin{align*}
\prod_{i=1}^m|z_i|^\frac{1}{m}\le \sum_{i=1}^m\frac{1}{m}|z_i|-\sum_{i=1}^m\frac{1}{m}(|z_i|^\frac{1}{2}-\sum_{k=1}^m\frac{1}{m}|z_k|^\frac{1}{2})^2\\ \Rightarrow m\prod_{i=1}^m|z_i|^\frac{1}{m}\le \sum_{i=1}^m|z_i|-\frac{m}{m}\sum_{i=1}^m(|z_i|^\frac{1}{2}-\sum_{k=1}^m\frac{1}{m}|z_k|^\frac{1}{2})^2 \\ \Rightarrow mRg\le SO(G)-m\sigma^2(z^\frac{1}{2})\Rightarrow m(Rg+\sigma^2(z^\frac{1}{2}))\le SO(G).
\end{align*}
\end{proof}

\begin{theorem}
Let 
$G$
be a graph with 
$n$
vertices and
$m$
edges, then for the d-points 
$z_i$
$(i=1,2,...,m)$
put
$M_1=min\{|z_1|,|z_2|,...,|z_m|\}$
and
$M_2=max\{|z_1|,|z_2|,...,|z_m|\},$
then
\begin{align}
m(R_g+\frac{\sigma^2(z)}{2M_2})\le SO(G)\le m(R_g+\frac{\sigma^2(z)}{2M_1}).
\end{align}
\end{theorem}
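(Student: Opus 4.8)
The statement is, after dividing by $m$ and recalling that $SO(G)=\sum_{i=1}^m|z_i|=mR_a$ and $\sigma^2(z)=\tfrac1m\sum_{i=1}^m(|z_i|-R_a)^2$, precisely the two-sided estimate
\[
\frac{\sigma^2(z)}{2M_2}\ \le\ R_a-R_g\ \le\ \frac{\sigma^2(z)}{2M_1}
\]
for the gap between the arithmetic and geometric means of the positive numbers $|z_1|,\dots,|z_m|$ (a Cartwright--Field type sharpening of the arithmetic--geometric mean inequality, in the same circle of ideas as the result of \cite{m-aldas} invoked in the previous theorem). So the plan is to establish this gap estimate and then multiply by $m$. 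The one elementary ingredient I would isolate first is the two-sided bound
\[
\frac{(t-1)^2}{2\max\{t,1\}}\ \le\ t-1-\ln t\ \le\ \frac{(t-1)^2}{2\min\{t,1\}}\qquad(t>0),
\]
proved by a short monotonicity argument: on each of the ranges $t\ge 1$ and $0<t<1$ the difference of the two sides vanishes at $t=1$ and, after writing $t=1+s$, the relevant auxiliary functions have derivatives of the form $\tfrac{s^2}{1+s}$ and $\tfrac{s^2}{2(1+s)^2}$, which are manifestly of constant sign for $s>-1$.

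For the upper bound I would apply this with $t=|z_i|/R_a$, multiply through by $R_a$, and average over $i$. Since $\tfrac1m\sum|z_i|=R_a$ and $\tfrac1m\sum\ln|z_i|=\ln R_g$, the middle term collapses to $R_a\ln(R_a/R_g)$, while $\min\{|z_i|,R_a\}\ge M_1$ turns the right-hand estimate into $\sigma^2(z)/(2M_1)$, so $R_a\ln(R_a/R_g)\le \sigma^2(z)/(2M_1)$. Because $\ln s\le s-1$ gives $R_a\ln(R_a/R_g)\ge R_a\bigl(1-R_g/R_a\bigr)=R_a-R_g$, this yields $R_a-R_g\le\sigma^2(z)/(2M_1)$, which is the right half of the theorem.

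The lower bound is the step that needs more care and is the main obstacle: the same computation centred at $R_a$ only gives $R_a\ln(R_a/R_g)\ge\sigma^2(z)/(2M_2)$, which is \emph{weaker} than what is wanted, since $R_a-R_g<R_a\ln(R_a/R_g)$. The remedy I would use is to expand around the geometric mean instead: apply the two-sided bound with $t=|z_i|/R_g$, multiply by $R_g$, and average. Now $\prod|z_i|=R_g^{\,m}$ forces $\tfrac1m\sum\ln(|z_i|/R_g)=0$, so the middle term collapses cleanly to $R_a-R_g$, and since $\max\{|z_i|,R_g\}\le M_2$ the left-hand estimate gives $R_a-R_g\ \ge\ \tfrac{1}{2M_2}\cdot\tfrac1m\sum_{i=1}^m(|z_i|-R_g)^2$. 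Finally, expanding the square and using $\sum_{i=1}^m(|z_i|-R_a)=0$ gives the identity $\tfrac1m\sum(|z_i|-R_g)^2=\sigma^2(z)+(R_a-R_g)^2\ge\sigma^2(z)$, which upgrades the last inequality to $R_a-R_g\ge\sigma^2(z)/(2M_2)$.

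Multiplying both inequalities by $m$ then delivers $m\bigl(R_g+\tfrac{\sigma^2(z)}{2M_2}\bigr)\le SO(G)\le m\bigl(R_g+\tfrac{\sigma^2(z)}{2M_1}\bigr)$; as noted in the Remark preceding Theorem~\ref{theo1}, one repeats the argument verbatim with $|z_i|$ replaced by $|z_i'|$ or $|z_i''|$ to obtain the corresponding bounds for $SO_{red}(G)$ and $SO_{ave}(G)$ (whenever the reduced, resp. averaged, $d$-radii are all positive). A shorter route, matching the style of the previous theorems, is simply to quote the refined arithmetic--geometric mean inequality $\tfrac{1}{2M_2}\sum\alpha_i(x_i-A)^2\le A-G\le\tfrac{1}{2M_1}\sum\alpha_i(x_i-A)^2$ from \cite{m-aldas} with $\alpha_i=1/m$, $x_i=|z_i|$, and rescale; but the self-contained argument above is the one I would actually write out, with the $R_g$-centred expansion being the crux.
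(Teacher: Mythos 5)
Your argument is correct, but it establishes the key inequality from scratch, whereas the paper's proof consists entirely of quoting Remark 3 of \cite{m-aldas} --- the two-sided refinement $\frac{1}{2M_2}\sum_i\alpha_i\bigl(x_i-\sum_k\alpha_k x_k\bigr)^2\le\sum_i\alpha_i x_i-\prod_i x_i^{\alpha_i}\le\frac{1}{2M_1}\sum_i\alpha_i\bigl(x_i-\sum_k\alpha_k x_k\bigr)^2$ --- specializing to $\alpha_i=1/m$, $x_i=|z_i|$, and multiplying by $m$; this is exactly the ``shorter route'' you sketch in your final sentence. Your self-contained derivation checks out: the elementary bound $\frac{(t-1)^2}{2\max\{t,1\}}\le t-1-\ln t\le\frac{(t-1)^2}{2\min\{t,1\}}$ is verified by the monotonicity argument you describe (the auxiliary derivatives are $\frac{(t-1)^2}{2t^2}$ and $-\frac{(t-1)^2}{t}$, of constant sign); the choice $t=|z_i|/R_a$ together with $\min\{|z_i|,R_a\}\ge M_1$ and $\ln s\le s-1$ yields $R_a-R_g\le R_a\ln(R_a/R_g)\le\sigma^2(z)/(2M_1)$; and your observation that the lower bound must instead be expanded around $R_g$ --- exploiting $\frac{1}{m}\sum_i\ln(|z_i|/R_g)=0$, $\max\{|z_i|,R_g\}\le M_2$, and the identity $\frac{1}{m}\sum_i(|z_i|-R_g)^2=\sigma^2(z)+(R_a-R_g)^2\ge\sigma^2(z)$ --- is precisely the non-obvious step that makes the left-hand inequality come out in the stated form. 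What your route buys is independence from the external reference (the paper does not verify the quoted remark, and indeed transcribes it with minor typographical slips); what the paper's route buys is brevity and uniformity with the adjacent theorems, all of which are obtained by the same substitution $\alpha_i=1/m$ into results of \cite{Aldas} and \cite{m-aldas}.
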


\begin{proof}
With attention to the remark 3 of the refrence \cite{m-aldas} that if point
$0<M_1=min\{x_1,x_2,...x_m\}$ 
and
$M_2=max\{x_1,x_2,...,x_m\},$
then
\begin{align*}
\frac{1}{2M_2}\sum_{i=1}^m\alpha_i(x_i-\sum_{k=1}^m\alpha_k |x_k|)^2\le \sum_{i=1}^m\alpha_i x_i-\prod_{i=1}^mx_i\alpha_i \le \frac{1}{2M_1}\sum_{i=1}^m\alpha_i(x_i-\sum _{k=1}^m \alpha_k x_k)^2
\end{align*}
Now consider the graph 
$G$
with d-points
$z_i$
for 
$i=1,2,...,m.$
So
$|z_i|>0 $
and for
$\alpha=(\alpha_1,\alpha_2,...,\alpha_m)=(\frac{1}{m},\frac{1}{m},...,\frac{1}{m})$
where
$\sum_{i=1}^m\alpha_i=1,$
we have
\\
\begin{align*}
\frac{1}{2M_2}\sum_{i=1}^m\frac{1}{m}(|z_i|-\sum_{k=1}^m\frac{1}{m}|z_k|)^2\le \sum_{i=1}^m\frac{1}{m} |z_i|-\prod_{i=1}^m|z_i|^\frac{1}{m}\le \frac{1}{2M_1}\sum_{i=1}^m\frac{1}{m}(|z_i|-\sum _{k=1}^m \frac{1}{m} |z_i|)^2
\\ \Rightarrow \frac{m}{2M_2m}\sum_{i=1}^m(|z_i|-\sum_{k=1}^m\frac{1}{m}|z_k|)^2\le \sum_{i=1}^m |z_i|-m\prod_{i=1}^m|z_i|^\frac{1}{m}\le \frac{m}{2M_1m}\sum_{i=1}^m(|z_i|-\sum _{k=1}^m \frac{1}{m}|z_i|)^2
\\ \Rightarrow \frac{m}{2M_2}\sigma^2(z)\le SO(G)-mR_g \le \frac{m}{2M_1}\sigma^2(z),~~~~~~~~~~~~~~~~~~~~~~~~~~~~
\end{align*}
therefore
\begin{align*}
m(R_g+\frac{\sigma ^2(z)}{2M_2})\le SO(G)\le m(R_g+\frac{\sigma ^2(z)}{2M_1}).
\end{align*}
\end{proof}
        
\section{The geometric mean, standard deviation and Sombor index}        
  Upper and lower bounds that introduced for the sombor index in the theorem below show that the Sombor index has behavior closely similar to the standard deviation index or the linear combination of the geometric mean and standard deviation indices, so in order to the bounds for the Sombor index we can apply inequalities for the standard deviation index.

\begin{definition}
Suppose that
$z_1,~z_2,~...,~z_m$
be a sequence of the degree-points pertained the edges of the graph 
$G,$
then for the nonnegative numbers
$|z_1|,~|z_2|,~...,~|z_m|$
the arithmetic mean 
$\mu$
and variance 
$\sigma ^2$
are denotned as:
\begin{align}
\mu=\frac{1}{m}\sum_{i=1}^{n}|z_i|,~~~~~\sigma ^2=\frac{1}{m}\sum_{i=1}^{m}(|z_i|-\mu)^2.
\end{align}
\end{definition}

\begin{theorem}
Let
$z_1,~z_2,~... ,~z_m$
be a sequence of degree-points pertained edges of the graph 
$G$
for 
$m\ge 2,$
then for the sequence 
$|z_1|,~ |z_2|, ~... ,~ |z_m|$
with 
$SO(G) > 0$
and variance 
$\sigma^2,$
\begin{align}\label{eqs}
\frac{m}{\sqrt{m-1}}\sigma< SO(G)\le m(R_g+\sqrt{m-1}~\sigma)
\end{align}
\end{theorem}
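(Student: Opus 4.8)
The plan is to translate everything into a statement about the nonnegative reals $a_i := |z_i|$, $i=1,\dots,m$. Writing $SO(G)=\sum_{i=1}^m a_i = m\mu$ and $\sigma^2 = \frac1m\sum_{i=1}^m a_i^2 - \mu^2$, and observing that each $a_i=\sqrt{d^2(u_i)+d^2(v_i)}>0$ because the endpoints of an edge have degree at least $1$, the two halves become elementary inequalities that I would handle separately; the strict positivity of the $a_i$ is what makes the left inequality strict.

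\emph{Lower bound.} Since all quantities involved are positive, $\frac{m}{\sqrt{m-1}}\sigma<SO(G)=m\mu$ is equivalent, after dividing by $m$ and squaring, to $\sigma^2<(m-1)\mu^2$. Substituting $\sigma^2=\frac1m\sum a_i^2-\mu^2$ turns this into $\sum_{i=1}^m a_i^2< m^2\mu^2=\bigl(\sum_{i=1}^m a_i\bigr)^2$. Expanding the right-hand side as $\sum_i a_i^2 + 2\sum_{i<j} a_i a_j$ and using that $m\ge 2$ guarantees at least one cross term, which is strictly positive because every $a_i>0$, gives the strict inequality. This finishes the lower bound.

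\emph{Upper bound.} It suffices to prove $\mu - R_g\le\sqrt{m-1}\,\sigma$ and multiply by $m$. Because the geometric mean of positive numbers dominates their minimum, $R_g\ge M_1=\min_i a_i$, so it is enough to show $\mu-M_1\le\sqrt{m-1}\,\sigma$. Fix an index $i_0$ with $a_{i_0}=M_1$. From $\sum_{i=1}^m (a_i-\mu)=0$ we get $\mu-M_1=\sum_{i\ne i_0}(a_i-\mu)$, so Cauchy--Schwarz over the $m-1$ indices $i\ne i_0$ yields
\begin{align*}
(\mu-M_1)^2\le (m-1)\sum_{i\ne i_0}(a_i-\mu)^2=(m-1)\bigl(m\sigma^2-(M_1-\mu)^2\bigr).
\end{align*}
Rearranging gives $m(\mu-M_1)^2\le m(m-1)\sigma^2$, hence $\mu-M_1\le\sqrt{m-1}\,\sigma$, and therefore $\mu\le R_g+\sqrt{m-1}\,\sigma$, i.e. $SO(G)=m\mu\le m(R_g+\sqrt{m-1}\,\sigma)$.

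All the computations are routine; the only two points that need care are (i) checking that the lower bound is genuinely strict, which is exactly where $m\ge 2$ and the positivity of the $a_i$ enter, and (ii) the bookkeeping in the Cauchy--Schwarz step: choosing a single minimizing index $i_0$, summing the centered deviations over $i\ne i_0$, and correctly identifying $\sum_{i\ne i_0}(a_i-\mu)^2=m\sigma^2-(M_1-\mu)^2$. I expect (ii) to be the main thing to get exactly right. Note that equality on the right can occur only when $\sigma=0$ (all $a_i$ equal), which is consistent with the non-strict $\le$ there.
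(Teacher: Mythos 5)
Your proof is correct, but it is genuinely more self-contained than the paper's: the paper simply quotes two results of Rodin \cite{Rodin} --- Lemma 1.1 there gives $\sigma/\mu<\sqrt{m-1}$ for positive data, and Corollary 1 gives $\mu-R_g\le\sqrt{m-1}\,\sigma$ --- and then substitutes $\mu=SO(G)/m$. You instead reprove both ingredients from scratch. For the lower bound you reduce to $\sum a_i^2<\bigl(\sum a_i\bigr)^2$ and use the strictly positive cross terms (which is exactly where $m\ge 2$ and $a_i>0$ enter, matching the hypothesis of Rodin's lemma); this is a clean elementary replacement for the citation. For the upper bound you take a slightly different path from Rodin: rather than bounding $\mu-R_g$ directly, you weaken via $R_g\ge M_1=\min_i a_i$ and then prove the Samuelson-type bound $\mu-M_1\le\sqrt{m-1}\,\sigma$ by Cauchy--Schwarz over the $m-1$ non-minimizing indices, with the bookkeeping identity $\sum_{i\ne i_0}(a_i-\mu)^2=m\sigma^2-(M_1-\mu)^2$ handled correctly. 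This establishes a (formally weaker, but sufficient) intermediate inequality and yields the same final bound $SO(G)\le m(R_g+\sqrt{m-1}\,\sigma)$. What your approach buys is independence from the external reference and full transparency about where strictness and the hypothesis $SO(G)>0$ are used; what the paper's approach buys is brevity and, in Corollary 1 of \cite{Rodin}, a sharper underlying estimate on $\mu-R_g$ itself, though that extra sharpness is not needed for the stated theorem.
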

\begin{proof}
Base on the Lemma 1.1 of refrence \cite{Rodin}, whereas all of the terms of the sequence 
$|z_1|,~|z_2|,~...,~|z_m|$
are posetive, so
\begin{align*}
\frac{\sigma}{\mu} <\sqrt{m-1}
\end{align*}
but
$\mu=\frac{1}{m}\sum_{i=1}^{n}|z_i|=\frac{SO(G)}{m}$
and so, after replacing in the above inequation, the left inequation (\ref{eqs}) is obtained. For the other hand, we can apply the corallary 1 in refrence \cite{Rodin}, whereas all of the terms of the sequence 
$|z_1|,~|z_2|,~...,~|z_m|$
are posetive, so
\begin{align*}
\mu-\sqrt[m]{|z_1|.|z_2|...|z_3|}\le \sqrt{m-1}~\sigma
\end{align*}
but
$\mu=\frac{1}{m}\sum_{i=1}^{n}|z_i|=\frac{SO(G)}{m}$
and so, after replacing in the above inequation, the right inequation is obtained. 
\end{proof}

\begin{theorem}
Let 
$G$
be a graph with
$n$
vertices,
$m$
edges
and 
$z_i$
for
$i=1,2,...,m$
be its d-points, then
\begin{align}
\frac{1}{\beta_{max}}(\sum_{i=1}^m\beta_i |z_i|-\prod_{i=1}^m|z_i|^{\beta_i})+mR_g \le SO(G) \le \frac{1}{\beta_{min}}(\sum_{i=1}^m\beta_i |z_i|-\prod_{i=1}^m|z_i|^{\beta_i})+mR_g~~~
\end{align}
such that 
$\beta=(\beta_1,\beta_2,...,\beta_m),$
$\beta_i>0$
and
$\sum_{i=1}^m\beta_i=1.$
\\
Equality holds in each of the inequalities if and only if 
$|z_1|=|z_2|=...=|z_m|$
or
$\beta_{max}=\frac{1}{m}$
(or
$\beta_{min}=\frac{1}{m}$).
\end{theorem}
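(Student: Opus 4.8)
The plan is to strip off the $mR_g$ term and reduce the statement to a two--weight comparison of arithmetic--geometric gaps. Write $a_i=|z_i|=\sqrt{d^2(u_i)+d^2(v_i)}>0$, so that $SO(G)=\sum_{i=1}^m a_i$ and $R_g=\prod_{i=1}^m a_i^{1/m}$, and set $A=\sum_{i=1}^m\beta_i a_i-\prod_{i=1}^m a_i^{\beta_i}$, which is $\ge 0$ by the weighted arithmetic--geometric inequality. Subtracting $mR_g$ from all three members, the claimed chain is equivalent to
\[
\frac{A}{\beta_{max}}\ \le\ SO(G)-mR_g\ \le\ \frac{A}{\beta_{min}} ,
\]
and I would note that $SO(G)-mR_g=m\bigl(\tfrac1m\sum_{i=1}^m a_i-\prod_{i=1}^m a_i^{1/m}\bigr)$ is exactly $m$ times the gap in (\ref{eq7}) for the uniform weights $\tfrac1m$.

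The crux is the following comparison, which I would take from \cite{m-aldas} --- it is the weighted counterpart of the mean--value sandwich used to prove the earlier theorem that bounds $SO(G)$ in terms of $M_1=\min_i|z_i|$ and $M_2=\max_i|z_i|$, now with $\beta_{min},\beta_{max}$ playing the role of $M_1,M_2$: for positive reals $x_1,\dots,x_m$ and weights $\beta_i>0$ with $\sum_{i=1}^m\beta_i=1$,
\[
m\,\beta_{min}\Bigl(\tfrac1m\sum_{i=1}^m x_i-\prod_{i=1}^m x_i^{1/m}\Bigr)\ \le\ \sum_{i=1}^m\beta_i x_i-\prod_{i=1}^m x_i^{\beta_i}\ \le\ m\,\beta_{max}\Bigl(\tfrac1m\sum_{i=1}^m x_i-\prod_{i=1}^m x_i^{1/m}\Bigr).
\]
Instantiate it at $x_i=a_i$. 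Dividing the right inequality by $\beta_{max}$ gives $A/\beta_{max}\le SO(G)-mR_g$, and dividing the left inequality by $\beta_{min}$ gives $SO(G)-mR_g\le A/\beta_{min}$; together these are the displayed two--sided bound, and adding $mR_g$ back recovers the theorem. So, with the comparison in hand, the proof is one substitution. If \cite{m-aldas} does not state it in exactly this normalization, I would instead prove the comparison directly: $A$ and the uniform gap are both homogeneous of degree $1$ in $(a_i)$, so after rescaling to $\prod a_i^{1/m}=1$ one is left with a scalar estimate that reduces, term by term, to the convexity inequality $\ln t\le t-1$.

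For the equality clause I would isolate the two sources of slack. If $a_1=\cdots=a_m$ then both gaps vanish and each inequality reads $SO(G)=mR_g$. If instead $\beta_{max}=\tfrac1m$ (respectively $\beta_{min}=\tfrac1m$), then since the $\beta_i$ are positive and sum to $1$ they must \emph{all} equal $\tfrac1m$, so $A$ coincides with the uniform gap and the corresponding bound is an equality. Conversely, if the $a_i$ are not all equal and $\beta_{max}\neq\tfrac1m$, strictness follows because the only place equality can enter the comparison above is through $\ln t\le t-1$, which is strict for $t\neq1$. I expect the main obstacle to be essentially bibliographic --- obtaining the inequality from \cite{m-aldas} in exactly the form above, with the factor $m$ and the normalizations lined up --- rather than anything genuinely new; the one point requiring care beyond that is that the equality condition is a true disjunction: it is enough that the d-radii all coincide \emph{or} that the extremal weight is already $1/m$.
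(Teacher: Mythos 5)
Your proof is correct and follows essentially the same route as the paper: both rest on Aldaz's two-weight comparison of arithmetic--geometric gaps (Theorem 2.1 of \cite{Aldas}, not \cite{m-aldas}), instantiated with the uniform weight $\frac{1}{m}$ in one slot and then rearranged by dividing by $\beta_{max}$ and $\beta_{min}$. Your discussion of the equality clause is a bonus the paper's own proof omits.
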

\begin{proof}
With attention to the theorem 2.1 of the reference \cite{Aldas} that state if
$x_i\ge 0,$
$\alpha_i>0, \beta_i>0$
$i=1,2,...,m$
such that 
$\sum_{i=1}^m\alpha_i=\sum_{i=1}^m\beta_i=1,$
$\alpha_{min}:=\{\alpha_1,...,\alpha_m\},$
$\alpha_{max}=max\{\alpha_1,...\alpha_m\},$
and similarity for 
$\beta_{min}$
and
$\beta_{max},$
then
\begin{align*}
\frac{\alpha_{min}}{\beta_{max}}(\sum_{i=1}^m\beta_ix_i-\prod_{i=1}^mx_i^{\beta_i})\le \sum_{i=1}^m\alpha_i x_i-\prod_{i=1}^m|z_i|^{\alpha_i}\le \frac{\alpha_{max}}{\beta_{min}}(\sum_{i=1}^m\beta_i x_i-\prod_{i=1}^mx_i^{\beta_i}).
\end{align*}
Writing 
$\alpha=(\alpha_1,...,\alpha_m)=(\frac{1}{m},\frac{1}{m},...\frac{1}{m}),$
\begin{align*}
\frac{1}{m\beta_{max}}(\sum_{i=1}^m\beta_i |z_i|-\prod_{i=1}^m|z_i|^{\beta_i})\le \sum_{i=1}^m\frac{1}{m} |z_i|-\prod_{i=1}^m|z_i|^{\frac{1}{m}}\le \frac{1}{m\beta_{min}}(\sum_{i=1}^m\beta_i |z_i|-\prod_{i=1}^m|z_i|^{\beta_i}),
\end{align*}
in result
\begin{align*}
\frac{1}{\beta_{max}}(\sum_{i=1}^m\beta_i |z_i|-\prod_{i=1}^m|z_i|^{\beta_i})\le SO(G)-mR_g \le \frac{1}{\beta_{min}}(\sum_{i=1}^m\beta_i|z_i|-\prod_{i=1}^m|z_i|^{\beta_i}),
\end{align*}
therefore
\begin{align*}
\frac{1}{\beta_{max}}(\sum_{i=1}^m\beta_i |z_i|-\prod_{i=1}^m|z_i|^{\beta_i})+mR_g\le SO(G)\le \frac{1}{\beta_{min}}(\sum_{i=1}^m\beta_i |z_i|-\prod_{i=1}^m|z_i|^{\beta_i})+mR_g.
\end{align*}
\end{proof}

\begin{theorem}
Let 
$G$
be a graph with
$n$
vertices and 
$m$
edges, then  for the ratio
$r_m(z)=\frac{mR_g}{SO(G)},$
\begin{align}
\lim_{m\rightarrow \infty}r_m(z)=e^{-\gamma},
\end{align}
in which 
$\gamma$
is named Euler's constant and
$e^{-\gamma}\approx 0.5615.$
\end{theorem}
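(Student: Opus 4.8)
The plan is to recognize $r_m(z)$ as the ratio of the geometric mean to the arithmetic mean of the d-point moduli, and then to read off its limit from the asymptotics of those two means. Since $SO(G)=|z_1|+\dots+|z_m|=mR_a$ with $R_a$ the arithmetic mean of $|z_1|,\dots,|z_m|$, and $R_g=\sqrt[m]{|z_1|\cdots|z_m|}$ is their geometric mean, one has
\begin{align*}
r_m(z)=\frac{mR_g}{SO(G)}=\frac{R_g}{R_a}=\exp\!\Big(\tfrac1m\sum_{i=1}^{m}\ln|z_i|-\ln\big(\tfrac1m\sum_{i=1}^{m}|z_i|\big)\Big),
\end{align*}
so the whole question is the limiting behaviour of $\tfrac1m\sum_i\ln|z_i|$ against $\ln\big(\tfrac1m\sum_i|z_i|\big)$ as the graph grows; in particular $r_m(z)\le 1$ for every $m$ by the AM--GM inequality (\ref{eq7}), so any limit is approached from below.

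To pin down the constant I would model the d-point moduli of a growing family of graphs as (asymptotically) an i.i.d.\ sample from a fixed continuous law $F$. The strong law of large numbers then gives $\tfrac1m\sum_i|z_i|\to\int x\,\mathrm{d}F$ and $\tfrac1m\sum_i\ln|z_i|\to\int\ln x\,\mathrm{d}F$, so by the display $r_m(z)\to\exp\big(\int\ln x\,\mathrm{d}F-\ln\int x\,\mathrm{d}F\big)$. The point is that this equals $e^{-\gamma}$ precisely when $F$ is an exponential law (of any rate): for $X$ exponential one has $\int\ln x\,\mathrm{d}F=\psi(1)+\ln\int x\,\mathrm{d}F$ with $\psi$ the digamma function and $\psi(1)=-\gamma$, so the scale cancels and the exponent is exactly $-\gamma$. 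This explains the value $e^{-\gamma}\approx 0.5615$; a deterministic version of the same statement --- a limit theorem for $G_n/A_n$ of a positive sequence --- can alternatively be lifted from the circle of mean--inequality results in \cite{Rodin} that already supplied Lemma~1.1 and Corollary~1 above, and then applied with $x_i=|z_i|$.

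The real content, and the step I expect to be the main obstacle, is justifying that the d-point moduli of a natural family of graphs $G_m$ with $|E(G_m)|=m\to\infty$ do satisfy the hypothesis that makes the constant come out: concretely, $\tfrac1m\sum_i\ln|z_i|-\ln\big(\tfrac1m\sum_i|z_i|\big)\to-\gamma$, equivalently that the empirical distribution of the $|z_i|$ converges (after a harmless rescaling, since $R_g$ and $R_a$ scale identically and $r_m$ is scale-invariant) to an exponential distribution. For a single fixed graph $r_m(z)$ is just a number, so the statement must be understood along such a sequence; exhibiting a concrete graph class or random-graph model with this property, and checking the integrability/regularity conditions behind the law-of-large-numbers step (or behind the deterministic theorem in \cite{Rodin}), is where essentially all the work sits --- the algebraic reduction $r_m(z)=R_g/R_a$ is immediate.
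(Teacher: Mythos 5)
Your algebraic reduction $r_m(z)=R_g/R_a$ and your identification of where the constant $e^{-\gamma}$ actually comes from (the expected logarithm of an exponential/gamma sample, via $\psi(1)=-\gamma$) are both correct, and you have put your finger on exactly the right obstacle: the limit $e^{-\gamma}$ is a statement about the geometric-to-arithmetic mean ratio of a \emph{random sample from a gamma (exponential) distribution} --- that is the content of Glaser's theorem cited in the paper --- and not a statement about arbitrary positive sequences. The step you flag as ``where essentially all the work sits,'' namely showing that the empirical distribution of the $|z_i|$ along a growing family of graphs converges to an exponential law, cannot be carried out in general because it is false. For a $k$-regular graph every $|z_i|$ equals $k\sqrt{2}$, so $r_m(z)=1$ for every $m$, and the limit along any sequence of regular graphs is $1$, not $e^{-\gamma}$; by varying the degree sequence one can arrange other limits as well. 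So the statement as written --- for an arbitrary graph, with no specified sequence of graphs and no distributional hypothesis on the degrees --- is not a theorem, and no argument can close the gap you isolated.

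For comparison, the paper's own proof does not attempt to close that gap either: it quotes the results of Aldaz and Glaser ($0\le r_m(x)\le 1$ for any nonzero vector, and $\lim_{m\to\infty}r_m(x)=e^{-\gamma}$ for a gamma-distributed sample), substitutes $x_i=|z_i|$, and asserts the conclusion without verifying, or even stating, the hypothesis that the $|z_i|$ form such a sample. Your write-up is therefore more careful than the source: you correctly identified the unverifiable distributional assumption as the crux, whereas the paper silently assumes it. The only part of the claim that survives for all graphs is the inequality $0\le r_m(z)\le 1$, which follows from AM--GM exactly as you note.
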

\begin{proof}
In the refrences \cite{Aldas} (remark 3) and \cite{glas} (Theorem 5.1) stated and proved that for
$n\ge2,$
$x=(x_1,x_2,...,x_m)\in \mathbb{R}^m\setminus \{0\}$
and base on the inequation (\ref{eq7}) for the ratio 
$r_m(x)=\frac{\prod_{i=1}^n|x_i|^{\frac{1}{m}}}{m^{-1}\sum_{i=1}^m|x_i|}$
always
$0\le r_m(x)\le 1$
and
$\lim_{m\rightarrow\infty} r_m(x)=e^{-\gamma}\approx 0.5615.$
\\
So for the d-points
$z_i~(i=1,2,...,m)$
and the vector 
$z=(|z_1|,|z_2|,...,|z_m|)$ 
of the graph 
$G,$
we have
\begin{align*}
r_m(x)=\frac{\prod_{i=1}^m|z_i|^{\frac{1}{m}}}{m^{-1}\sum_{i=1}^m|z_i|}=\frac{mR_g}{SO(G)},
\end{align*}
that base on the inequation (\ref{eq7}) always
$0\le r_m(z)\le1$
and
\begin{align*}
\lim_{m\rightarrow \infty}r_m(z)=e^{-\gamma}\approx 0.5615.
\end{align*}
\end{proof}

\section{The arithmetic-harmonic mean and Sombor index}
Also consider {\it the arithmetic-harmonic mean inequality} that for nonnegative real numbers 
$a_1,~a_2,~a_3,~...,~a_m$
is as follow
\begin{align}
R_h=\frac{m}{\sum_{i=1}^m{\frac{1}{a_i}}}\le \frac{1}{m}\sum_{i=1}^{m}a_i=R_a.
\end{align}      
        
Using the inequality of arithmatic-harmonic mean, we have the following theorem.
\begin{theorem}
Let 
$G$
be a graph with
$n$
vertices and
$m$ 
edges such that 
$z_i$
$(i=1,2,...,m)$
be degree-points pertained edges, then for the nonnegative numbers
$|z_1|,~|z_2|,~...,~|z_m|$
\begin{align}
mR_h\le SO(G).
\end{align}
and the equality holds whenever 
$|z_1|=|z_2|=...=|z_m|.$
\end{theorem}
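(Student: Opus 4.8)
The plan is to invoke the arithmetic--harmonic mean inequality recorded immediately above, applied to the positive reals $|z_1|,\dots,|z_m|$. First I would recall that, by the definition of the Sombor index together with the definition of the degree-radius, one has $SO(G)=\sum_{uv\in E(G)}\sqrt{d^2(u)+d^2(v)}=\sum_{i=1}^m|z_i|$, so the arithmetic mean of the numbers $|z_1|,\dots,|z_m|$ is exactly $R_a=\frac1m\sum_{i=1}^m|z_i|=\frac{SO(G)}{m}$.

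Next I would check that the harmonic mean is well defined: for an edge $u_iv_i$ both endpoints have degree at least $1$, hence $|z_i|=\sqrt{d^2(u_i)+d^2(v_i)}\ge\sqrt2>0$, so the $|z_i|$ are strictly positive and $R_h=\dfrac{m}{\sum_{i=1}^m 1/|z_i|}$ makes sense. Then the arithmetic--harmonic mean inequality for these $m$ numbers gives $R_h\le R_a$; substituting $R_a=SO(G)/m$ and multiplying both sides by $m$ yields $mR_h\le SO(G)$, which is the claimed bound.

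Finally, for the equality clause I would transport the known equality condition of the AM--HM inequality: equality $R_h=R_a$ holds precisely when $|z_1|=|z_2|=\cdots=|z_m|$, and this is exactly the condition stated in the theorem. I do not expect any genuine obstacle here; the argument is a one-line consequence of the AM--HM inequality plus the identity $SO(G)=\sum_{i=1}^m|z_i|$, and the only point meriting explicit mention is the positivity of each $|z_i|$ so that $R_h$ is defined.
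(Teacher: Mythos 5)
Your argument is correct and is exactly the one the paper intends: it states the theorem as an immediate consequence of the arithmetic--harmonic mean inequality applied to $|z_1|,\dots,|z_m|$ together with the identity $SO(G)=\sum_{i=1}^m|z_i|$, and in fact omits a written proof entirely. Your added remark on the positivity of each $|z_i|$ (so that $R_h$ is defined) is a sensible detail the paper leaves implicit.
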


\section{multiple of the Sombor indices of two graphs}

  Fundamental of the acquisition of the many lower bounds to many areas of mathematics is {\it cauchy-schwarz inequality}, in here also we are going to introuduce this inequality and then find a lower bound by it for sombor index of the graph
$G$, we also use {\it Chebyshev's inequality} for abtaining a upper bound for it.
\\
{\it \textbf{cauchy-schwarz inequality}} 
for real numbers 
$x_i$
and 
$y_i$
$(i=1,2, ...,n)$
is stated as
\begin{align}
(\sum_{i=1}^mx_iy_i)^2\le(\sum_{i=1}^mx_i^2).(\sum_{i=1}^my_i^2).
\end{align}

{\it \textbf{Chebyshev's inequality}} for two increasing sequences 
$\{x_i\}_{i=1}^n$
and
$\{y_i\}_{i=1}^n$
is stated as 
\begin{align}
(\sum_{i=1}^mx_i)(\sum_{i=1}^my_i)\le m\sum_{i=1}^mx_iy_i.
\end{align}

\begin{theorem}
Let 
 $z_1,~z_2,~,...,~z_m$
 be d-points of graph 
 $G_1$
 such that 
 $|z_1|>|z_2|>...>|z_m|$
 and
 $w_1,~w_2,~...,~w_m$
 be d-points of graph 
 $G_2$
 such that
 $|w_1|>|w_2|>...>|w_m|,$
 then
 \begin{align}
 \sum_{i=1}^m|z_i||w_i|\le SO(G_1).SO(G_2)\le m\sum_{i=1}^m|z_i||w_i|.
 \end{align}
 \end{theorem}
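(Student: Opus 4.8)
The plan is to observe that the Sombor index is literally the sum of the d-radii, namely $SO(G_1)=\sum_{i=1}^m|z_i|$ and $SO(G_2)=\sum_{i=1}^m|w_i|$ (here one tacitly assumes $G_1$ and $G_2$ both have $m$ edges, so that the pairing $z_i\leftrightarrow w_i$ is meaningful). With this identification the claimed chain becomes a purely numerical statement about the product of the two sums of the $m$ positive numbers $|z_i|$ and the $m$ positive numbers $|w_i|$, and each half follows from one of the two inequalities recalled immediately before the statement.

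For the left inequality I would expand the product
$$SO(G_1)\,SO(G_2)=\Big(\sum_{i=1}^m|z_i|\Big)\Big(\sum_{j=1}^m|w_j|\Big)=\sum_{i=1}^m\sum_{j=1}^m|z_i||w_j|,$$
and then simply discard all off-diagonal terms $|z_i||w_j|$ with $i\neq j$. Since $|z_i|,|w_j|\ge 0$, these discarded terms are nonnegative, so what remains, $\sum_{i=1}^m|z_i||w_i|$, is a lower bound for $SO(G_1)\,SO(G_2)$. No ordering hypothesis is needed for this half.

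For the right inequality I would invoke Chebyshev's inequality as stated above. The hypotheses give $|z_1|>|z_2|>\cdots>|z_m|$ and $|w_1|>|w_2|>\cdots>|w_m|$, so the sequences $\{|z_i|\}$ and $\{|w_i|\}$ are sorted in the same order. Applying Chebyshev's inequality with $x_i=|z_{m+1-i}|$ and $y_i=|w_{m+1-i}|$ (both now increasing, and with the same sums and the same value of $\sum x_iy_i$) gives
$$\Big(\sum_{i=1}^m|z_i|\Big)\Big(\sum_{i=1}^m|w_i|\Big)\le m\sum_{i=1}^m|z_i||w_i|,$$
which is exactly $SO(G_1)\,SO(G_2)\le m\sum_{i=1}^m|z_i||w_i|$.

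There is essentially no serious obstacle here; the only point needing a word of care is that Chebyshev's inequality is recorded in the excerpt for increasing sequences, while ours are decreasing — but two decreasing sequences are still similarly ordered, so the inequality applies verbatim after reversing the indices, as indicated above. Combining the two displays completes the proof.
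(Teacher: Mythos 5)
Your proposal is correct, and the right-hand half (Chebyshev) matches the paper's argument exactly; your extra remark about reversing the indices to convert the decreasing sequences into increasing ones is a point the paper glosses over, and your observation that both graphs must tacitly have the same number $m$ of edges is likewise a hypothesis the paper leaves implicit. The left-hand half, however, takes a genuinely different and more economical route. The paper sets $a_i=\sqrt[4]{d^2(u)+d^2(v)}$ and $b_i$ analogously, writes $\sum_i |z_i||w_i| \le \bigl(\sum_i \sqrt{|z_i||w_i|}\bigr)^2 = \bigl(\sum_i a_i b_i\bigr)^2$, and then applies the Cauchy--Schwarz inequality to get $\bigl(\sum_i a_i b_i\bigr)^2 \le \bigl(\sum_i a_i^2\bigr)\bigl(\sum_i b_i^2\bigr) = SO(G_1)\,SO(G_2)$; note that the paper's own first step (squaring the sum of square roots and keeping only the diagonal terms) is already the same ``discard the nonnegative cross terms'' idea you use, so the subsequent appeal to Cauchy--Schwarz is an extra layer rather than extra strength. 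Your version --- expand $\bigl(\sum_i|z_i|\bigr)\bigl(\sum_j|w_j|\bigr)$ into the double sum and drop the off-diagonal terms --- reaches the same bound in one step, needs no auxiliary quantities $a_i,b_i$, and makes it transparent that no ordering hypothesis is used for this half. The only thing the paper's detour buys is a thematic tie-in with the Cauchy--Schwarz inequality announced at the head of that section; as a matter of logic your argument is complete and, if anything, cleaner.
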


\begin{proof}
For the left-hand inequality, consider
$G_1$
and put 
$a_i=\sqrt[4]{d^2(u)+d^2(v)},$
then
$|z_i|=a_i^2=(\sqrt[4]{d^2(u)+d^2(v)})^2=\sqrt{d^2(u)+d^2(v)}$
and in result
$\sum_{i=1}^m|z_i|=\sum_{i=1}^ma_i^2=SO(G_1)$
and for
$G_2$
put
$b_i=\sqrt[4]{d^2(\overline{u})+d^2(\overline{v})},$
then
$|w_i|=b_i^2=(\sqrt[4]{d^2(\overline{u})+d^2(\overline{v})})^2=\sqrt{d^2(u)+d^2(v)}$
and in result
$\sum_{i=1}^m|w_i|=\sum_{i=1}^mb_i^2=SO(G_2),$
so base on the above relations and cauchy-schwarz inequality, we have
\begin{align*}
\sum_{i=1}^m|z_i||w_i|\le (\sum_{i=1}^m(|z_i||w_i|)^{\frac{1}{2}})^2=(\sum_{i=1}^m|z_i|^{\frac{1}{2}}|w_i|^{\frac{1}{2}})^2\\=(\sum_{i=1}^ma_ib_i)^2\le (\sum_{i=1}^ma_i^2)(\sum_{i=1}^mb_i^2)~~~~~~~~~~~~~~~~~~~~~~~~~~~~~~~~~~~~~~~~~~~~\\=(\sum_{i=1}^m\sqrt{d^2(u)+d^2(v)})(\sum_{i=1}^m\sqrt{d^2(u)+d^2(v)})=SO(G_1).SO(G_2).
\end{align*}
For the right-hand inequality, by Chebyshev's inequality, it is concluded that for nonnegative numbers 
$|z_i| $
and
$|w_i|~ (i=1,~2,~...,~m)$
if 
$|z_1|>|z_2|>...>|z_m|$
and 
$|w_1|>|w_2|>...>|w_m|,$
then
\begin{align*}
(\sum_{i=1}^m|z_i|)(\sum_{i=1}^m|w_i|)\le m\sum_{i=1}^m|z_i||w_i|\Rightarrow SO(G_1).SO(G_2)\le m\sum_{i=1}^m|z_i||w_i|.
\end{align*}
\end{proof}

\section{The arithmetic-geometric and geometric-arithmetic indices and Sombor index}

Several years ago, the arithmetic-geometric and geometric-arithmetic indices were introduced in mathematical chemistry. Now in this paper, we determine some boundes for the Sombor index by coefficients of each one of these indices.
\begin{definition}
Two sample of the topological indices applied in the chemical graph theorem that are stated in refrences \cite{cui} and \cite{vuj} are  arithmetic-geometry index and geometric-arithmetic index which are defined as follow
\begin{align}
AG(G)=\sum_{uv\in E(G)}\frac{1}{2}(\sqrt{\frac{d(u)}{d(v)}}+\sqrt{\frac{d(v)}{d(u)}})=\sum_{uv\in E(G)}\frac{1}{2}\big(\frac{d(u)+d(v)}{\sqrt{d(u)d(v)}}\big),
\end{align}
and
\begin{align}
GA(G)=\sum_{uv\in E(G)}\big(\frac{2\sqrt{d(u)d(v)}}{d(u)+d(v)}\big).
\end{align}
\end{definition}

Considering the reference \cite{vuj}, it is proved that
\begin{align}
GA(G)\le AG(G).
\end{align}
Also note that 
\begin{align}
AG(G)\le SO(G),
\end{align}
because for each the edge
$e=uv \in E(G)$
\begin{align*}
\frac{d(u)+d(v)}{\sqrt{2}}\le \sqrt{d^2(u)+d^2(v)}\Rightarrow \frac{d(u)+d(v)}{2}\le \sqrt{d^2(u)+d^2(v)}\\
\\ \Rightarrow  \frac{d(u)+d(v)}{2\sqrt{d(u)d(v)}}\le \sqrt{d^2(u)+d^2(v)}~~~~~~~~~~~~~~~~~~~~~\\ 
\\ \Rightarrow  \sum_{i=1}^m\frac{d(u)+d(v)}{2\sqrt{d(u)d(v)}}\le \sum_{i=1}^m\sqrt{d^2(u)+d^2(v)}~~~~~~~~~~~~\\
\\ \Rightarrow AG(G)\le SO(G).~~~~~~~~~~~~~~~~~~~~~~~~~~~~
\end{align*}
 Now we are going to improve the above bound:

\begin{theorem}
Let 
$G$
be a graph with
$n$
vertices,
$m$ 
edges and with maximum degree 
${\Delta}$
and minimum degree 
$\delta,$
then
\begin{align}
\sqrt{2}\delta AG(G)\le SO(G)\le\sqrt{2}(n-1)AG(G).
\end{align}
\end{theorem}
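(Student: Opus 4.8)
The plan is to prove both inequalities termwise over the edge set and then add them up, exploiting that $SO(G)=\sum_{uv\in E(G)}\sqrt{d^2(u)+d^2(v)}$ and $AG(G)=\sum_{uv\in E(G)}\tfrac12\cdot\frac{d(u)+d(v)}{\sqrt{d(u)d(v)}}$ are both sums indexed by $E(G)$. Fix an edge $uv\in E(G)$ and abbreviate $p=d(u)$, $q=d(v)$, so that $\delta\le p,q\le\Delta\le n-1$; the edge contributions are $\sqrt{p^2+q^2}$ to $SO(G)$ and $\tfrac12\cdot\frac{p+q}{\sqrt{pq}}$ to $AG(G)$.

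For the left inequality I would invoke $(\ref{7})$, i.e. $\sqrt{p^2+q^2}\ge (p+q)/\sqrt2$, so it suffices to check the edgewise estimate $\sqrt2\,\delta\cdot\tfrac12\cdot\frac{p+q}{\sqrt{pq}}\le\frac{p+q}{\sqrt2}$. Cancelling the positive factor $p+q$ and clearing denominators, this reduces exactly to $\delta\le\sqrt{pq}$, which is immediate from $p,q\ge\delta>0$. Summing the resulting bound $\sqrt2\,\delta\cdot\tfrac12\cdot\frac{p+q}{\sqrt{pq}}\le\sqrt{p^2+q^2}$ over $E(G)$ gives $\sqrt2\,\delta\,AG(G)\le SO(G)$.

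For the right inequality the key point is that every $AG$-term is at least $1$: by the arithmetic–geometric mean inequality $(\ref{eq7})$ applied to $p$ and $q$ one has $\sqrt{pq}\le (p+q)/2$, whence $\tfrac12\cdot\frac{p+q}{\sqrt{pq}}\ge1$. Since also $p^2+q^2\le 2\Delta^2$ yields $\sqrt{p^2+q^2}\le\sqrt2\,\Delta\le\sqrt2\,(n-1)$, we obtain $\sqrt{p^2+q^2}\le\sqrt2\,(n-1)\le\sqrt2\,(n-1)\cdot\tfrac12\cdot\frac{p+q}{\sqrt{pq}}$, and summation over the $m$ edges gives $SO(G)\le\sqrt2\,(n-1)\,AG(G)$. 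Equivalently one can argue globally: $SO(G)\le\sqrt2\,\Delta\,m\le\sqrt2\,(n-1)\,m\le\sqrt2\,(n-1)\,AG(G)$, using $AG(G)\ge m$.

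The only delicate step is the upper bound. One should resist the naive rearrangement $\frac{2\sqrt{pq}\,\sqrt{p^2+q^2}}{p+q}\le\sqrt2(n-1)$ followed by bounding $\sqrt{pq}$ above by $\Delta$, since that would demand the false requirement $\Delta\le (n-1)/\sqrt2$ (consider a complete graph). The correct move is to use $2\sqrt{pq}\le p+q$ first, so that the factor $p+q$ cancels and only $\sqrt{p^2+q^2}\le\sqrt2\,\Delta\le\sqrt2\,(n-1)$ survives. Everything else is routine, and $\Delta\le n-1$ is just the simplicity of $G$.
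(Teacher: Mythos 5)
Your proof is correct, and it establishes exactly the termwise estimate $\sqrt{2}\,\delta\cdot\frac{p+q}{2\sqrt{pq}}\le\sqrt{p^2+q^2}\le\sqrt{2}\,(n-1)\cdot\frac{p+q}{2\sqrt{pq}}$ that the paper also sums over $E(G)$, but by a genuinely different route. The paper analyzes the ratio $f(x,y)=\sqrt{x^2+y^2}\,\big/\,\tfrac{x+y}{2\sqrt{xy}}$ on the region $\delta\le x\le y\le n-1$, claims monotonicity from a partial derivative, and reads off the extreme values $f(\delta,\delta)=\sqrt{2}\,\delta$ and $f(n-1,n-1)=\sqrt{2}\,(n-1)$ at the diagonal corners. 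You instead decouple the two factors of that ratio: for the lower bound you combine $\sqrt{p^2+q^2}\ge (p+q)/\sqrt{2}$ with $\sqrt{pq}\ge\delta$, and for the upper bound you combine $\sqrt{p^2+q^2}\le\sqrt{2}\,\Delta\le\sqrt{2}\,(n-1)$ with the AM--GM fact that every $AG$-term satisfies $\tfrac{p+q}{2\sqrt{pq}}\ge 1$. Your version buys three things: it uses no calculus; it sidesteps the paper's derivative computation, whose displayed formula for $\partial f/\partial x$ is not a correct application of the quotient rule (although the monotonicity it is meant to justify does hold); and it makes visible the slightly sharper conclusion $SO(G)\le\sqrt{2}\,\Delta\,AG(G)$, of which the stated bound is the weakening obtained from $\Delta\le n-1$. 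Your closing caution about the order in which the two estimates must be applied in the upper bound is apt and does not affect correctness.
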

\begin{proof}
For obtaining lower bound and upper bound, consider the following function 
\begin{align*}
f(x,y)=\frac{\sqrt{x^2+y^2}}{\frac{x+y}{2\sqrt{xy}}}=\frac{2\sqrt{x^3y+y^3x}}{x+y}
\end{align*}
where 
$2\le \delta \le x\le y\le n-1.$
Therefore
\begin{align*}
\frac{\partial f}{\partial x}=\frac{(x^3y+y^3x)(3x^2y+y^3)(x+y)-2\sqrt{x^3y+y^3x}}{(x+y)^2}\ge 0
\end{align*}
and this means that 
$f(x,y)$
is a increasing function in x. Thus, the function obtains its minimum at point 
$(\delta,y_1)$
for some 
$y_1$
such that 
$\delta \le y_1\le n-1$
and its maximum at point 
$(y_2,y_2)$
for some 
$y_2$
such that 
$\delta \le y_2.$
it is concluded that 
\begin{align*}
f(\delta, \delta)\le f(x,y)\le f(n-1,n-1)~~~~~~~~~~~~~~~\\ 
\\ \Rightarrow \sqrt{2}\delta \le f(x,y) \le \sqrt{2}(n-1)~~~~~~~~~~~~~~~~~~\\ 
\\ \Rightarrow  \sqrt{2}\delta \frac{x+y}{2\sqrt{xy}}\le \sqrt{x^2+y^2} \le \sqrt{2}(n-1)\frac{x+y}{2\sqrt{xy}}\\ 
\\ \Rightarrow \sqrt{2}\delta \sum_{i=1}^m\frac{x+y}{2\sqrt{xy}}\le \sum_{i=1}^m\sqrt{x^2+y^2} \le \sqrt{2}(n-1)\sum_{i=1}^m\frac{x+y}{2\sqrt{xy}}\\ 
\\ \Rightarrow \sqrt{2}\delta AG(G) \le SO(G)\le \sqrt{2}(n-1)AG(G).
\end{align*}
\end{proof}

\begin{theorem}
Let
$G$
be a graph with
$n$
vertices and
$m$ 
edges, then
\begin{align}
GA(G)< SO(G).
\end{align}
\end{theorem}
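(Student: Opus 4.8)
The plan is to compare the two sums edge by edge, showing that for every edge $e=uv\in E(G)$ the Sombor summand dominates the geometric–arithmetic summand, and that the inequality is strict on at least one edge so that the sum is strictly larger. For a fixed edge write $x=d(u)$, $y=d(v)$ with $x,y\ge 1$ (and in fact $x,y\ge 1$ always, with $x+y\ge 3$ whenever $m\ge 1$ on a connected component with more than one edge). The edge contribution to $GA(G)$ is $\dfrac{2\sqrt{xy}}{x+y}$, which is the reciprocal of the arithmetic-over-geometric ratio, so by the AM–GM inequality it satisfies $\dfrac{2\sqrt{xy}}{x+y}\le 1$. On the other hand the Sombor summand is $\sqrt{x^2+y^2}\ge\sqrt{\tfrac12(x+y)^2}=\dfrac{x+y}{\sqrt 2}\ge\dfrac{x+y}{\sqrt2}\ge 1$ as soon as $x+y\ge\sqrt2$, which holds since $x,y\ge 1$. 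Combining, $\dfrac{2\sqrt{xy}}{x+y}\le 1\le\sqrt{x^2+y^2}$ for every edge.

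First I would record the elementary inequality $\dfrac{2\sqrt{xy}}{x+y}\le 1$ (equality iff $x=y$), which is just $(\sqrt x-\sqrt y)^2\ge 0$ rewritten. Second I would record $\sqrt{x^2+y^2}\ge 1$ for $x,y\ge 1$, which is immediate. Third I would sum both bounds over all $m$ edges to obtain $GA(G)\le m\le SO(G)$, and then argue strictness. For strictness note that the chain gives equality only if on \emph{every} edge both $x=y$ and $x^2+y^2=1$; but $x,y\ge 1$ forces $x^2+y^2\ge 2>1$, so the right inequality $\sqrt{x^2+y^2}\ge 1$ is already strict on every edge (indeed $\sqrt{x^2+y^2}\ge\sqrt2>1$). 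Hence $GA(G)\le m<SO(G)$, which is the claimed strict inequality; one may also cite the already-established $GA(G)\le AG(G)\le SO(G)$ and upgrade one of the two steps to strictness.

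The only mild subtlety — and the part that needs a careful sentence rather than a routine computation — is handling the trivial/degenerate cases: if $G$ has no edges both sides are $0$ and the strict inequality fails, so one should assume $m\ge1$; and if some component of $G$ is a single edge $K_2$ then $x=y=1$ on that edge, giving $GA$-summand $=1$ and Sombor summand $=\sqrt2$, so strictness still holds there. Thus the statement is correct provided $m\ge 1$ (which is implicit in the paper's running hypotheses), and I would state this assumption explicitly. No genuine obstacle arises; the whole argument is a two-line edgewise estimate via AM–GM plus the observation $x^2+y^2\ge 2$.
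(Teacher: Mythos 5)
Your argument is essentially the paper's own proof: both compare the two sums edge by edge, using AM--GM to get the $GA$-summand $\frac{2\sqrt{d(u)d(v)}}{d(u)+d(v)}\le 1$ and the observation $\sqrt{d^2(u)+d^2(v)}>1$ to get strictness. Your explicit handling of the degenerate case $m=0$ is a small but worthwhile addition that the paper omits.
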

\begin{proof}
Base on the inequality (\ref{eq7}) for each edge 
$e=uv \in E(G),$ 
\begin{align*}
\sqrt{d(u).d(v)}\le \frac{d(u)+d(v)}{2}\Rightarrow 2\sqrt{d(u).d(v)}\le d(u)+d(v),
\end{align*}
then
\begin{align}\label{eq12}
\Rightarrow \frac{2\sqrt{d(u).d(v)}}{d(u)+d(v)}\le 1,
\end{align}
and since 
$\sqrt{d^2u+d^2v}>1,$
\begin{align*}
\sum_{uv \in E(G)}\frac{2\sqrt{d(u).d(v)}}{d(u)+d(v)}<\sum_{uv \in E(G)}\sqrt{d^2(u)+d^2(v)}\\
\\ \Rightarrow GA(G)< SO(G).~~~~~~~~~~~~~~~~~~~~~
\end{align*}
\end{proof}
Now we are going to improve this bound:
\begin{theorem}
Let
$G$
be a graph with
$n$
vertices and
$m$ 
edges, then
\begin{align}
\sqrt{2}\delta GA(G)\leq SO(G)\leq \sqrt{2}(n-1)GA(G)
\end{align}
\end{theorem}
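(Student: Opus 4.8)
The plan is to mimic the proof of the preceding theorem and reduce the asserted double inequality to a two–variable estimate for the individual edge contributions. For an edge $uv\in E(G)$ write $x=d(u)$, $y=d(v)$, so that $\delta\le x,y\le\Delta\le n-1$, and consider
\[
h(x,y)=\frac{\sqrt{x^{2}+y^{2}}}{\dfrac{2\sqrt{xy}}{x+y}}=\frac{(x+y)\sqrt{x^{2}+y^{2}}}{2\sqrt{xy}}.
\]
If one shows $\sqrt{2}\,\delta\le h(x,y)\le\sqrt{2}\,(n-1)$ for every admissible $(x,y)$, then multiplying through by $\tfrac{2\sqrt{xy}}{x+y}>0$ and summing over all edges gives exactly $\sqrt{2}\,\delta\,GA(G)\le SO(G)\le\sqrt{2}\,(n-1)\,GA(G)$. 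Note that on the diagonal $h(x,x)=\sqrt{2}\,x$, so $\sqrt2\,\delta$ and $\sqrt2\,(n-1)$ are precisely the values of $h$ at the corners $(\delta,\delta)$ and $(n-1,n-1)$ of the square $[\delta,n-1]^{2}$, which is what makes the claimed constants plausible.

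For the lower bound I would argue edge by edge with no calculus: by AM--GM, $x+y\ge 2\sqrt{xy}$, hence $\tfrac{x+y}{2\sqrt{xy}}\ge 1$, while $x,y\ge\delta$ gives $\sqrt{x^{2}+y^{2}}\ge\sqrt{2}\,\delta$; multiplying these yields $h(x,y)\ge\sqrt{2}\,\delta$, and summing gives $SO(G)\ge\sqrt2\,\delta\,GA(G)$. Equivalently one may combine the already–established bound $\sqrt{2}\,\delta\,AG(G)\le SO(G)$ with the inequality $GA(G)\le AG(G)$ recalled above.

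For the upper bound the intended route is to show that $h$ attains its maximum over $[\delta,n-1]^{2}$ at $(n-1,n-1)$. Passing to logarithms, $\partial(\log h)/\partial x$ has the same sign as the cubic $p(x,y)=3x^{3}+x^{2}y+xy^{2}-y^{3}$ (since its natural common denominator is the positive quantity $2x(x+y)(x^{2}+y^{2})$); on the region where $p\ge 0$ the function $h$ is nondecreasing in $x$ and, symmetrically, in $y$, so the maximum is pushed to the corner $(n-1,n-1)$ and $h\le\sqrt2\,(n-1)$ follows, after which summation finishes as above.

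The main obstacle — and the genuinely delicate point of the statement — is exactly this monotonicity step: $p(x,y)=3x^{3}+x^{2}y+xy^{2}-y^{3}$ turns negative once $y$ is appreciably larger than $x$ (roughly when $y/x$ exceeds the unique positive root of $3t^{3}+t^{2}+t-1$, i.e.\ once $y$ is more than about twice $x$), so $h$ is not monotone on the whole square and its maximum need not occur at $(n-1,n-1)$. Indeed, along the edge $x=\delta$ one finds $h(\delta,n-1)>\sqrt2\,(n-1)$ for $\delta=1$ and $n-1\ge 6$ (the star $K_{1,n-1}$ is an explicit witness). Consequently the corner argument only goes through under an extra hypothesis controlling the edge–degree ratio $d(v)/d(u)$ (equivalently, restricting to graphs whose edges stay in the regime where $p\ge 0$), and I would add such a hypothesis to the statement; the lower bound, by contrast, holds unconditionally.
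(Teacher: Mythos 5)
Your diagnosis is exactly right, and it lands squarely on the paper's own argument. The paper's proof is precisely the corner/monotonicity route you describe: it sets $f(x,y)=\bigl(\tfrac{\sqrt{x^2+y^2}}{2\sqrt{xy}/(x+y)}\bigr)^2=\tfrac{(x^2+y^2)(x+y)^2}{4xy}$ and asserts
\begin{align*}
\frac{\partial f}{\partial x}=\frac{12x^4y+16x^3y^2+8x^2y^3-4y^5}{(4xy)^2}\ge 0 ,
\end{align*}
from which it pushes the maximum to $(n-1,n-1)$. But that numerator factors as $4y(x+y)\,(3x^3+x^2y+xy^2-y^3)$ --- exactly your polynomial $p(x,y)$ --- and is negative as soon as $y$ exceeds roughly $2.17\,x$ (e.g.\ at $(x,y)=(1,3)$ or $(2,6)$), so the claimed monotonicity in $x$ fails on the admissible square and the paper's proof of the upper bound is invalid. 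Your star counterexample then shows the upper bound is false as stated (for $K_{1,6}$ one has $SO=6\sqrt{37}\approx 36.5$ versus $\sqrt{2}\cdot 6\cdot GA\approx 35.6$); and the defect is not cured by the tacit assumption $\delta\ge 2$ appearing in the paper's proof, since the friendship (windmill) graphs, with all degrees $2$ except one hub of degree $n-1$, violate the bound for large $n$ as well. So your conclusion that the upper bound needs an extra hypothesis controlling the degree ratio along edges is correct.

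Your lower-bound argument is valid and is in fact better than the paper's: the factorization $h(x,y)=\sqrt{x^2+y^2}\cdot\tfrac{x+y}{2\sqrt{xy}}\ge \sqrt{2}\,\delta\cdot 1$ needs no calculus and no discussion of where extrema occur, and the alternative derivation from $\sqrt{2}\,\delta\,AG(G)\le SO(G)$ together with $GA(G)\le AG(G)$ is equally sound. The only caveat is that, as a referee's report on the theorem, your proposal (correctly) does not prove the statement as printed --- because the statement as printed is not true.
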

\begin{proof}
Consider the following function
\begin{align*}
f(x,y)=(\frac{\sqrt{x^2+y^2}}{\frac{2\sqrt{xy}}{x+y}})^2=\frac{(x^2+y^2)(x+y)^2}{4xy},
\end{align*}
where 
$2\leq \delta \leq x \leq y\leq n-1$
then
\begin{align*}
\frac{\partial f}{\partial x}=\frac{12x^4y+16x^3y^2+8x^2y^3-4y^5}{(4xy)^2}\ge 0,
\end{align*}
this means that 
$f(x,y)$
is a increasing function in 
$x$
and give its minimum at point 
$(\delta, y_1)$
for some
$\delta$
such that 
$2 \leq \delta\le y_1 \le n-1,$
and its maximum at point 
$(n-1,n-1)$.
it is concluded that
\begin{align*}
f(\delta , \delta)\le f(x,y)\le f(n-1,n-1) \Rightarrow \sqrt{2}\delta \le f(x,y)\le \sqrt{2}(n-1)\\
\Rightarrow \sqrt{2}\delta ~\frac{2\sqrt{xy}}{x+y}\le \sqrt{x^2+y^2}\le \sqrt{2}(n-1)~\frac{2\sqrt{xy}}{x+y}~~~~~~~~~~~~~~~~~~~~\\
\Rightarrow \sqrt{2}\delta  \sum_{i=1}^m\frac{2\sqrt{xy}}{x+y}\le \sum_{i=1}^m\sqrt{x^2+y^2}\le \sqrt{2}(n-1)\sum_{i=1}^m\frac{2\sqrt{xy}}{x+y}~~~~~~\\
\Rightarrow \sqrt{2}\delta GA(G)\le SO(G)\le \sqrt{2}(n-1)GA(G).~~~~~~~~~~~~~~~~~~~~~~~~
\end{align*}
\end{proof}
\section{The symmetric division deg index and Sombor index}
   Base on the reference \cite{vas}, the symmetric division deg index (SDD(G)) is as another index for predicting some physicochemical properties of substances and its tests are carried out by International Academy of Mathematical Chemistry, so here we want to pay a little attention to it and compare it with Sombor index:

\begin{definition}
The symmetric division deg index (SDD(G)) for graph 
$G$
is defined as
\begin{align*}
SDD(G)=(\frac{min\{d(u),d(v)\}}{max\{d(u),d(v)\}}+\frac{max\{d(u),d(v)\}}{min\{d(u),d(v)\}})=\sum_{i=1}^m(\frac{d(u)}{d(v)}+\frac{d(v)}{d(u)}),
\end{align*}
so
\begin{align}
SDD(G)=\sum_{i=1}^m\frac{d^2(u)+d^2(v)}{d(u)d(v)}
\end{align}
\end{definition}

\begin{theorem}
Let
$G$
be a graph with
$n$
vertices and
$m$ 
edges, then
\begin{align}
\frac{\sqrt{2}}{2}\delta SDD(G)\le SO(G)\le \frac{\sqrt{2}}{2}(n-1)SDD(G).
\end{align}
\end{theorem}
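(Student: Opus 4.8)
The plan is to mirror the method used above in the proofs of the $AG$ and $GA$ bounds: reduce the inequality to a pointwise estimate on each edge and then sum over $E(G)$. For an edge $e=uv\in E(G)$ write $x=d(u)$ and $y=d(v)$; after relabeling we may assume $x\le y$, so that $2\le\delta\le x\le y\le n-1$. The quantity to control is the ratio of the Sombor summand $\sqrt{x^2+y^2}$ to the $SDD$ summand $\frac{x^2+y^2}{xy}$, namely
\begin{align*}
f(x,y)=\frac{\sqrt{x^2+y^2}}{\frac{x^2+y^2}{xy}}=\frac{xy}{\sqrt{x^2+y^2}}.
\end{align*}

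Next I would show that $f$ is nondecreasing in each variable on this region. A direct differentiation, in which the $x^2$ terms in the numerator cancel, gives
\begin{align*}
\frac{\partial f}{\partial x}=\frac{y}{\sqrt{x^2+y^2}}-\frac{x^2y}{(x^2+y^2)^{3/2}}=\frac{y^3}{(x^2+y^2)^{3/2}}\ge 0,
\end{align*}
and since $f(x,y)=f(y,x)$ the same holds for $\partial f/\partial y$. Hence on the triangle $\delta\le x\le y\le n-1$ the function $f$ attains its minimum at the corner $(\delta,\delta)$ and its maximum at the corner $(n-1,n-1)$. As $f(t,t)=\frac{t^2}{\sqrt{2}\,t}=\frac{\sqrt{2}}{2}\,t$, this yields
\begin{align*}
\frac{\sqrt{2}}{2}\,\delta\le f(x,y)\le\frac{\sqrt{2}}{2}\,(n-1).
\end{align*}

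Finally I would clear the denominator: multiplying the displayed chain by $\frac{x^2+y^2}{xy}$ gives, for every edge, $\frac{\sqrt{2}}{2}\delta\cdot\frac{x^2+y^2}{xy}\le\sqrt{x^2+y^2}\le\frac{\sqrt{2}}{2}(n-1)\cdot\frac{x^2+y^2}{xy}$, and summing over all $m$ edges turns the middle term into $SO(G)$ and the outer terms into $\frac{\sqrt{2}}{2}\delta\,SDD(G)$ and $\frac{\sqrt{2}}{2}(n-1)\,SDD(G)$, which is the assertion. The only delicate point is the monotonicity step: one must verify that the partial derivative is genuinely nonnegative (it is, because of the cancellation above) and check that restricting to $x\le y$ does not move the extremum off the diagonal — it does not, since $f$ increases in each coordinate separately, so both the minimum and the maximum over the triangle occur at its diagonal corners $(\delta,\delta)$ and $(n-1,n-1)$.
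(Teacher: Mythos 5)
Your proposal is correct and follows essentially the same route as the paper: form the edge-wise ratio $f(x,y)=xy/\sqrt{x^2+y^2}$ of the Sombor summand to the $SDD$ summand, show it is increasing in each variable, evaluate at the diagonal corners $(\delta,\delta)$ and $(n-1,n-1)$, and sum over edges. Your computation $\partial f/\partial x = y^3/(x^2+y^2)^{3/2}$ is in fact cleaner and more clearly correct than the derivative expression displayed in the paper, which is garbled though it reaches the same monotonicity conclusion.
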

\begin{proof}
  For the other hand side of the inequality, consider the following function
\begin{align*}
f(x,y)=(\frac{\sqrt{x^2+y^2}}{\frac{x^2+y^2}{xy}})=\frac{xy\sqrt{x^2+y^2}}{x^2+y^2},
\end{align*}
where 
$2\leq \delta \leq x \leq y\leq n-1$
then
\begin{align*}
\frac{\partial f}{\partial x}=\frac{(y(x^2+y^2)+x)\sqrt{x^2+y^2}-2x^2y\sqrt{x^2+y^2}}{(x^2+y^2)^2}\ge 0,
\end{align*}
this means that 
$f(x,y)$
is a increasing function in 
$x$
and give its minimum at point 
$(\delta, y_1)$
for some
$\delta$
such that 
$2 \le \delta\le y_1 \le n-1,$
and its maximum at point 
$(n-1,n-1)$.
it is concluded that
\begin{align*}
f(\delta , \delta)\le f(x,y)\le f(n-1,n-1) \Rightarrow \frac{\sqrt{2}}{2}\delta \le f(x,y)\le \frac{\sqrt{2}}{2}(n-1)\\
\Rightarrow \frac{\sqrt{2}}{2}\delta \frac{x^2+y^2}{xy}\le \sqrt{x^2+y^2}\le \frac{\sqrt{2}}{2}(n-1)\frac{x^2+y^2}{xy}~~~~~~~~~~~~~~~~~~\\
\Rightarrow \frac{\sqrt{2}}{2}\delta \sum_{i=1}^m\frac{x^2+y^2}{xy}\le \sum_{i=1}^m\sqrt{x^2+y^2}\le \frac{\sqrt{2}}{2}(n-1)\sum_{i=1}^m\frac{x^2+y^2}{xy}~~~\\
\Rightarrow \frac{\sqrt{2}}{2}\delta SDD(G)\le SO(G)\le \frac{\sqrt{2}}{2}(n-1)SDD(G).~~~~~~~~~~~~~~~~~~
\end{align*}
\end{proof}

\section{Conclusion}
In this paper, we provide some bounds for the Sombor index by some other indices like the arithmetic index and geometric index and could  correlate these indices with the variance and standard deviation indices which are important indices in statistic science. Also with using arithmetic-geometric or geometric-arithmetic indices, we determine some bounds for Sombor index base on the coefficients of these indices.

\section*{Data Availability}
The data used to support the findings of this study are included
within the article.

\section*{Conflicts of Interest}
The authors declare that they have no conflicts of interest.

%
%
%
%
%
%
%

\footnotesize

\end{document}